\newcommand{\id}{\mathrm{id}}
\newcommand{\card}{\mathrm{card}}
\newcommand{\C}{\mathcal{C}}
\newcommand{\D}{\mathcal{D}}
\newcommand{\newcategory}[1]{\expandafter\newcommand\csname #1\endcsname{\mathbf{#1}}}
\newcommand{\pperp}{\mathop{\Perp}}
\newcommand{\perpop}{\mathop{\perp}}
\begin{document}
%\journal{Fuzzy Sets and Systems}
%\begin{frontmatter}
\title{Orthomodular posets are algebras over bounded posets with involution}
\author{Gejza Jenča}
\institute{
G. Jenča \at
Department of Mathematics and Descriptive Geometry\\
Faculty of Civil Engineering,
Slovak University of Technology,
	Slovak Republic\\
              \texttt{gejza.jenca@stuba.sk}
}
%\end{frontmatter}
\maketitle
\begin{abstract}
We prove that there is a monadic adjunction between the category of bounded posets
with involution and the category of orthomodular posets. 
\subclass{Primary: 03G12, Secondary: 06F20, 81P10} 
\keywords{orthomodular poset, monadic functor} 
\end{abstract}
\section{Introduction}
Effect algebras were introduced in~\cite{FouBen:EAaUQL} as (at that point in time) the most general version of quantum
logics. The motivating example was the set of all Hilbert space effects, a notion
that plays an important role in quantum mechanics \cite{Lud:FoQM,BusLahMit:TQToM}.
An equivalent definition was given independently
in~\cite{KopCho:DP}. Later it turned out that
both groups of authors rediscovered the definition given already
in~\cite{GiuGre:TaFLfUP}.

In \cite{kalmbach1977orthomodular} the following theorem was proved.
\begin{theorem}\label{thm:kalmbach}
Every bounded lattice $L$ can be embedded into an orthomodular lattice $K(L)$.
\end{theorem}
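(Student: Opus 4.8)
The plan is to realise $K(L)$ through \emph{Kalmbach's construction}, which pastes together one Boolean algebra for every finite chain of $L$. Concretely, for a finite chain $C = \{0 = c_0 < c_1 < \cdots < c_k = 1\}$ normalised to contain $0$ and $1$, I regard the $k$ intervals $[c_{i-1},c_i]$ as atoms and let $B_C \cong 2^{\{1,\dots,k\}}$ be the Boolean algebra of all unions of such intervals, with orthocomplementation given by set-complementation of the chosen index set. An element of $K(L)$ is then a ``finite union of intervals,'' identified across chains: refining $C$ to a finer chain $C'$ splits intervals and induces an order- and complement-preserving embedding $B_C \hookrightarrow B_{C'}$, so I take $K(L)$ to be $\bigcup_C B_C$ with elements identified under refinement, each normalised to its unique representation by non-mergeable intervals. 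The bounds are $0 = \emptyset$ and $1 = [0,1]$, and since the orthocomplement is defined blockwise compatibly with refinement, it descends to a well-defined order-reversing involution $x \mapsto x^\perp$ on all of $K(L)$.

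The order on $K(L)$ must be defined \emph{globally} rather than merely blockwise, because incomparable $a,b \in L$ yield elements $[0,a]$ and $[0,b]$ that share no common chain. First I would write down the explicit global order together with the meet and join formulas, which route through the lattice operations of $L$ itself, and then verify that $(K(L), \le, {}^\perp, 0, 1)$ is a bounded lattice with an order-reversing involution. This is the most laborious part: I must check that meets and joins exist for \emph{all} pairs, not only those sitting inside one block, and that the result is independent of the interval representatives chosen.

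The decisive structural fact — and the step I expect to be the real crux — is that \textbf{any two comparable elements of $K(L)$ lie in a common block $B_C$}. Granting this, orthomodularity is essentially free: given $x \le y$, choose a chain $C$ with $x,y \in B_C$; since $B_C$ is Boolean it is (trivially) orthomodular, and the orthocomplement computed in $K(L)$ restricts to the Boolean complement in $B_C$, so the identity $y = x \vee (y \wedge x^\perp)$ already holds inside $B_C$ and therefore in $K(L)$. Thus the entire weight of the argument is shifted onto (i) proving $K(L)$ is a lattice and (ii) the ``comparable implies co-blocked'' lemma, after which every orthomodular statement collapses to the Boolean case.

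Finally, I would define the embedding $e \colon L \to K(L)$ by $e(a) = [0,a]$, the lower interval regarded as an element of $B_{\{0,a,1\}}$. It is manifestly injective and order-preserving, and order-reflecting because $[0,a] \le [0,b]$ forces $a \le b$ once the global order is in hand; checking that $e$ also preserves finite meets and joins, so that $e(a \wedge b) = [0, a\wedge b]$ and $e(a \vee b) = [0, a \vee b]$, draws on the meet/join formulas established in the lattice step. Since $e(0) = \emptyset$ and $e(1) = [0,1]$, this exhibits $e$ as a bounded-lattice embedding of $L$ into the orthomodular lattice $K(L)$, as required.
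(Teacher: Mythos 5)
First, note that the paper does not actually prove this statement: Theorem~\ref{thm:kalmbach} is quoted from \cite{kalmbach1977orthomodular} and the construction $K(L)$ is only named, so there is no in-paper proof to compare against. Your plan is nevertheless recognisably the classical Kalmbach construction --- elements are finite unions of intervals determined by finite chains through $0$ and $1$, each such chain $C$ contributes a Boolean block $B_C$, the orthocomplement is blockwise set-complementation, and orthomodularity is reduced to the Boolean case via the observation that comparable elements share a block. That architecture is correct, and $a\mapsto[0,a]$ is the right embedding.

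The genuine gap is that the load-bearing steps are announced rather than carried out. You never write down the global order (it is: $x\leq y$ iff every interval of $x$ is contained in some interval of $y$, where $[a,b]\subseteq[c,d]$ amounts to $c\leq a\leq b\leq d$), and without it neither antisymmetry under your normalisation, nor the order-reversal of the involution, nor the crux lemma can be checked. That ``comparable implies co-blocked'' lemma does hold, for a concrete reason you should supply: if each interval of $x$ sits inside an interval of $y$, then every endpoint of $x$ lies in one of the pairwise ``stacked'' intervals of $y$, so the set of all endpoints of $x$ and $y$ together with $0,1$ is totally ordered in $L$ and is therefore itself a chain $C$ with $x,y\in B_C$. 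Finally, the existence of \emph{all} binary meets and joins (not just the orthogonal ones) is exactly where the hypothesis that $L$ is a lattice rather than a mere bounded poset enters --- for bounded posets one only obtains an orthomodular poset, as the paper recalls from \cite{mayet1995classes} --- and this is precisely the verification you defer as ``the most laborious part.'' So the proposal is a correct proof strategy with the decisive verifications missing, not yet a proof.
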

The proof of the theorem is constructive, $K(L)$ is known under the name {\em
Kalmbach extension} or {\em Kalmbach embedding}. In \cite{mayet1995classes},
authors proved that Theorem \ref{thm:kalmbach} can be generalized: every bounded
poset $P$ can be embedded in an orthomodular poset $K(P)$.  In fact,
by~\cite{harding2004remarks}, this $K$ is then left adjoint to the
forgetful functor from orthomodular posets to bounded posets. This adjunction gives
rise to a monad on the category of bounded posets, which we call the {\em Kalmbach
monad}.

For every monad $(T,\eta,\mu)$ on a category $\C$, there is an {\em
Eilenberg-Moore} category $\C^T$ (sometimes called the {\em category of algebras
over $T$}
or the {\em category of modules over $T$}). The category $\C^T$ comes equipped with
a canonical adjunction between $\C$ and $\C^T$ and this adjunction gives rise to
the original monad $T$ on $\C$. A functor equivalent to a right adjoint $\C^T\to\C$
is called {\em monadic}.

It was proved in~\cite{jenca2015effect} that the Eilenberg-Moore category for
the Kalmbach monad is isomorphic to the category of effect algebras. In other
words, the forgetful functor from the category of effect algebras to the category
of bounded posets is monadic. Later, it was proved 
in~\cite{jenca2020pseudo} that the forgetful functor from the category of
pseudo-effect algebras (see \cite{dvurecenskij2001pseudoeffect}) to the category of
bounded posets is monadic. Recently, it was proved
in~\cite{vandewetering2021categorical} that both $\omega$-complete effect algebras
and $\omega$-complete effect monoids can be represented as categories of algebras
over the category of bounded posets.

On the other hand, it clearly follows from  
\cite{jenca2015effect}  that the right adjoint functor from the category of
orthomodular posets to the category of bounded posets is \underline{not} monadic.
Indeed, this right adjoint gives rise to the Kalmbach monad and the Eilenberg-Moore
category for the Kalmbach monad is the category of effect algebras
which is clearly not equivalent to the category of orthomodular posets.

This leads to a natural question: is the category of orthomodular posets equivalent
to a category of algebras for some nontrivial monad? In the present
paper, we answer this question in the positive. We prove that the forgetful
functor from the category of orthomodular posets to the category of bounded posets
equipped with involution is monadic.

\section{Preliminaries}

We assume familiarity with basics of category theory, see
\cite{mac1998categories,awodey2006category,riehl2016category} for reference.

\subsection{Posets and bounded posets}
$\Pos$ is the usual category of posets equipped with isotone maps.
A {\em bounded poset} is a structure $(P,\leq,0,1)$ such that
$\leq$ is a partial order on $P$ and $0,1\in P$ are the bottom and top
elements of $(P,\leq)$, respectively.

Let $P_1,P_2$ be bounded posets. A map $f:P_1\to P_2$ is a 
{\em morphism of bounded posets} if and only if it satisfies the following
conditions.
\begin{itemize}
\item $f$ is isotone.
\item $f(1)=1$ and $f(0)=0$.
\end{itemize}

The category of bounded posets is denoted by $\BPos$.

\subsection{Bounded posets with involution}

An {\em involution} on a poset $P$ is mapping $'\colon P\to P$ satisfying the following
conditions.
\begin{itemize}
\item For all $x,y∈P$, $x≤y$ if and only if $y'≤x'$.
\item For all $x∈P$, $x''=x$.
\end{itemize}

Note that every bounded poset with involution $(P,≤,',0,1)$ satisfies $0'=1$.
Indeed, for every $x∈P$, $0≤x'$ implies $x''=x≤0'$, so $0'$ is the greatest element
of $P$.

Two elements $x,y$ of a bounded poset with involution are said to be {\em
orthogonal} (in symbols $x\perp y$), if $x\leq y'$.
Note that $\perpop$ is a symmetric relation and
that for all $x∈P$, $x\perp 0$.

The objects of the category $\BPosInv$ are bounded posets with
involution (sometimes called {\em involutive boun\-ded posets}).  A mapping $f\colon
P\to Q$ between two bounded posets with involution is a morphism in $\BPosInv$ if
and only if the following conditions are satisfied.
\begin{itemize}
\item $f$ is a morphism of bounded posets.
\item For all $x∈P$, $f(x')=(f(x))'$.
\end{itemize}

\subsection{Effect algebras}

An {\em effect algebra} 
is a partial algebra $(E,+,0,1)$ with a binary 
partial operation $+$ and two nullary operations $0,1$ satisfying
the following conditions.
\begin{enumerate}[(E1)]
\item If $a+b$ is defined, then $b+a$ is defined and
		$a+b=b+a$.
\item If $a+b$ and $(a+b)+c$ are defined, then
		$b+c$ and $a+(b+c)$ are defined and
		$(a+b)+c=a+(b+c)$.
\item For every $a∈E$ there is a unique $a'∈E$ such that
		$a+a'$ exists and $a+a'=1$.
\item If $a+1$ is defined, then $a=0$.
\end{enumerate}

In an effect algebra $E$, we write $a≤b$ if and only if there is
$c∈E$ such that $a+c=b$.  It is easy to check that for every effect
algebra $(E,≤,',0,1)$ is a bounded poset with involution.
Moreover, it is possible to introduce
a new partial operation $-$; $b-a$ is defined if and only if $a≤
b$ and then $a+(b-a)=b$.  It can be proved that, in an effect
algebra, $a+b$ is defined if and only if $a\perp b$.

Let $E_1$, $E_2$ be effect algebras. A map $f:E_1\to E_2$ is called a
{\em morphism of effect algebras} if and only if it satisfies the following conditions.
\begin{itemize}
\item $f(1)=1$.
\item If $a+b$ exists, then $f(a)+f(b)$ exists and $f(a+b)=f(a)+f(b)$.
\end{itemize}
The category of effect algebras is denoted by $\EA$.

We note that every morphism of effect algebras is isotone and preserves $0$, $1$
and the involution, hence every morphism of effect algebras is a morphism of the
underlying bounded posets with involution. Moreover, every morphism of effect
algebras preserves the partial operation $-$.

\subsection{Orthomodular posets}

An {\em orthomodular poset} is a bounded poset with involution $(A,≤,',0,1)$ 
satisfying the following conditions, for all $x,y\in A$.
\begin{enumerate}[(OMP1)]
\item $x\wedge x'=0$.
\item If $x\perp y$, then $x\vee y$ exists.
\item If $x≤y$, then $x\vee (x\vee y')'=y$.
\end{enumerate}

Let $A_1,A_2$ be orthomodular posets. A map $f:A_1\to A_2$ is called a {\em
morphism of orthomodular posets} if and only if it is a morphism of bounded
posets with involution such that for all $x\perp y$ in $A_1$,
$f(x)\perp f(y)$ and $f(x\vee y)=f(x)\vee f(y)$.

Let $A$ be an orthomodular poset. A subset $B\subseteq A$ is a {\em subalgebra}
of $A$ if and only if $0,1\in B$, $B$ is closed with respect to $'$ and
for all $x,y\in B$ such that $x\perp y$, $x\vee y\in B$.

The category of orthomodular posets is denoted by $\OMP$. If $A$ is an orthomodular
poset, then we may introduce a partial $+$ operation on $A$ by the following rule:
$x+y$ exists if and only if $x\perp y$ and then $x+y:=x\vee y$.  The resulting structure is
then an effect algebra. This gives us the object part of an evident full and
faithful functor $\OMP\to\EA$.

\begin{proposition}\label{prop:OMPiscomplete}
The category $\OMP$ is small-complete.
\end{proposition}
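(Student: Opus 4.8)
The plan is to invoke the standard criterion that a category is small-complete if and only if it has all small products and all equalizers (see \cite{mac1998categories}), and to construct both directly, showing they are computed ``componentwise'', i.e. that the forgetful functor $\OMP\to\BPosInv$ creates them.

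For products, given a family $(A_i)_{i\in I}$ of orthomodular posets, I would equip the Cartesian product $\prod_{i\in I}A_i$ with the componentwise order, the componentwise involution $(x_i)'=(x_i')$, and the bounds $0=(0_i)$ and $1=(1_i)$. The point is that orthogonality, meets, and joins are all computed coordinatewise: $(x_i)\perp(y_i)$ iff $x_i\perp y_i$ for every $i$, and when two elements are orthogonal their join is $(x_i\vee y_i)$. Each of (OMP1)--(OMP3) is then an immediate consequence of the corresponding axiom holding in every $A_i$, since a meet or join in a product of posets, when it exists in each coordinate, is the coordinatewise one. The projections are visibly morphisms of orthomodular posets, and for any cone $(f_i\colon B\to A_i)_i$ the induced map $x\mapsto(f_i(x))_i$ is the unique mediating morphism; it preserves orthogonality and orthogonal joins precisely because the $f_i$ do and the structure on the product is coordinatewise.

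For equalizers of a parallel pair $f,g\colon A\to B$, I would take the set-theoretic equalizer $E=\{x\in A:f(x)=g(x)\}$ with the order and involution inherited from $A$. Since $f$ and $g$ preserve $0$, $1$, the involution, and orthogonal joins, $E$ contains $0,1$, is closed under $'$, and is closed under the join of any orthogonal pair of its elements; that is, $E$ is a subalgebra of $A$ in the sense defined above. The crucial step is to verify that such a subalgebra, with the induced order, is itself an orthomodular poset. The key observation is that both orthogonal joins and the relevant meets are absolute: if $x,y\in E$ with $x\perp y$, then $x\vee y\in E$ and it remains the least upper bound of $x,y$ within $E$, since any $E$-upper bound is an $A$-upper bound; dually, a lower bound in $E$ of $x$ and $x'$ is a lower bound in $A$, hence lies below $x\wedge x'=0$, giving (OMP1) in $E$. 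For (OMP3), if $x\le y$ in $E$ then $x\perp y'$ and, once $x\vee y'$ is formed, $x\perp(x\vee y')'$, so both joins occurring in the orthomodular law are orthogonal joins and are therefore computed as in $A$, where the identity $x\vee(x\vee y')'=y$ already holds. Thus $E\in\OMP$, the inclusion $E\hookrightarrow A$ is a morphism, and it is the equalizer by the usual argument.

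I expect the main obstacle to be exactly this verification that a subalgebra inherits the orthomodular structure --- in particular checking that the meet in (OMP1) and the two joins in (OMP3) are computed the same way in $E$ as in $A$, so that the axioms transfer. Once the absoluteness of these meets and joins is established, small-completeness follows formally from the criterion.
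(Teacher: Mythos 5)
Your proposal is correct and follows essentially the same route as the paper: products of orthomodular posets are built coordinatewise on the underlying bounded posets with pointwise involution, equalizers are the subalgebra inclusions $E=\{x\in A:f(x)=g(x)\}\hookrightarrow A$, and completeness follows from the products-plus-equalizers criterion. The paper simply declares these verifications ``easy to check,'' whereas you supply the (correct) details, in particular the key observation that orthogonal joins and the meet in (OMP1) are absolute, i.e.\ computed the same way in the subalgebra as in the ambient orthomodular poset.
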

\begin{proof}
It is easy to check that a product of every family of orthomodular posets can be
constructed as a product of underlying bounded posets, the involution $'$ is
defined pointwise. For a parallel pair of morphisms $f,g\colon A\to B$ in $\OMP$,
their equalizer is the inclusion of a subalgebra $E=\{x\in A:f(x)=g(x)\}$ into $A$.
Since the category $\OMP$ has all products and all equalizers, it has all small
limits.
\end{proof}

The following functors will be used in what follows.

\subsection{The functor $U\colon\BPosInv\to\Pos$}

$U$ is just the forgetful functor that takes every object of
$\BPosInv$ to its underlying poset.

\subsection{The functor $\perpop\colon\BPosInv\to\Pos$}

The orthogonality relation on the objects of $\BPosInv$ can
be exhibited as a functor from $\BPos$ to $\Pos$, as follows.
For every bounded poset with involution $P$, $\perpop(P)$ is the
order ideal in the poset $U(P)\times U(P)$ given by the rule
$$
\perpop(P)=\{(x,y)∈U(P)\times U(P)\mid x≤y'\}.
$$
For a morphism $f\colon P\to Q$ in $\BPosInv$, $\perpop(f)\colon
\perpop(P)\to\perpop(Q)$ is given by $\perpop(f)(x,y)=(f(x),f(y))$.  
\subsection{The functor $\pperp\colon\BPosInv\to\Pos$}

For every bounded poset with involution $P$, $\pperp(P)$ is the
order ideal in the poset $U(P)\times U(P)\times U(P)$ given by the rule
\begin{multline*}
\pperp(P)=\{(x,y,z)∈U(P)\times U(P)\times U(P)\mid \\
x\perp y\text{ and }x\perp
z\text{ and }y\perp z\}.
\end{multline*}

On morphisms, $\pperp$ is defined coordinatewise, similarly as we did for $\perpop$.

\subsection{The functor $I\colon\Pos\to\Pos$}

For every poset $P$, let us write $I(P)$ for the set of comparable pairs $\{(a,b)∈P\times P\colon a≤b\}$ and partially
order $I(P)$ by the rule
$(a,b)≤(c,d)$ if and only if $c≤a≤b≤d$. Note that the elements of
$I(P)$ can be identified with closed intervals of $P$, ordered by inclusion.
We shall write $[a≤b]$ or $[b\geq a]$ for the element $(a,b)$ of $I(P)$.
The construction $P\mapsto I(P)$ can be made into a functor $\Pos\to\Pos$ by the rule
$I(f)([a≤b])=[f(a)≤f(b)]$.

\subsection{General adjoint functor theorem}

Adjoint functor theorems give conditions under which a continuous functor $G$
has a left adjoint $F$.  This allows us to avoid construction of the functor
$F$, which is sometimes a difficult endeavor.

\begin{theorem}
\cite[Theorem~V.6.2]{mac1998categories} \cite{freyd1964abelian}
\label{thm:gaft}
Given a locally small, small-complete category $\D$, a functor
$G\colon\D\to\C$ is a right adjoint if and only if $G$ preserves
small limits and satisfies the following 

\underline{Solution Set Condition:} for each object $X$ of $\C$ there is a set $I$
and an $I$-indexed family of arrows $h_i\colon X\to G(A_i)$ such that
every arrow $h\colon X\to G(A)$ can be written as a composite
$h=G(j)\circ h_i$ for some $j\colon A_j\to A$. 
\end{theorem}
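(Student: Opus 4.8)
The plan is to treat the two directions separately, with the reverse implication carrying all the weight. For the \emph{only if} direction, suppose $G$ has a left adjoint $F$ with unit $\eta$. Right adjoints preserve all limits, so $G$ preserves small limits; and the singleton family $\{\eta_X\colon X\to G(F(X))\}$ witnesses the Solution Set Condition, since any $h\colon X\to G(A)$ is the transpose of a unique $\bar h\colon F(X)\to A$ and therefore factors as $h=G(\bar h)\circ\eta_X$.

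For the \emph{if} direction I would invoke the standard characterisation of adjoints by universal arrows: $G$ is a right adjoint as soon as, for every object $X$ of $\C$, the comma category $(X\downarrow G)$ possesses an initial object. Here an object of $(X\downarrow G)$ is a pair $(A,f)$ with $f\colon X\to G(A)$, and a morphism $(A,f)\to(B,g)$ is an arrow $j\colon A\to B$ in $\D$ satisfying $G(j)\circ f=g$; an initial object is exactly a universal arrow from $X$ to $G$, and these assemble into the functor $F$ together with its unit. So the real task reduces to producing an initial object in each $(X\downarrow G)$.

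The first observation is that $(X\downarrow G)$ is small-complete. Given a small diagram in it, I would take the limit $L$ of its image in $\D$ (available since $\D$ is complete); because $G$ preserves this limit, the cone of structure maps $X\to G(A_i)$ factors uniquely through $G(L)$, and this factorisation endows $L$ with the comma structure making it the limit. Next, the Solution Set Condition says precisely that $(X\downarrow G)$ has a weakly initial \emph{set} $\{(A_i,h_i)\}_{i\in I}$. Forming the product $W=\prod_{i\in I}(A_i,h_i)$ inside the now complete comma category collapses this set to a single weakly initial object, since any comma morphism $(A_i,h_i)\to(A,f)$ precomposed with the $i$-th projection $W\to(A_i,h_i)$ supplies a morphism $W\to(A,f)$.

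The crux is then the passage from a weakly initial object to an honest initial one, and this is the step I expect to be the main obstacle. Let $e\colon E\to W$ be the joint equalizer of the \emph{set} of all endomorphisms of $W$; this is a small limit precisely because $\D$ is locally small, so that $\mathrm{Hom}(W,W)$ is a set. By construction $t\circ e=e$ for every endomorphism $t$ of $W$, and $E$ remains weakly initial. For uniqueness of morphisms out of $E$, given parallel $u,v\colon E\to A$ I would form their equalizer $m\colon M\to E$, choose some $f\colon W\to M$ by weak initiality of $W$, and note that $e\circ m\circ f$ is an endomorphism of $W$; hence $e\circ(m\circ f\circ e)=e$, and since $e$ is monic this forces $m\circ f\circ e=\id_E$, exhibiting the monomorphism $m$ as a split epimorphism, hence an isomorphism, whence $u=v$. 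Thus $E$ is initial. Throughout, the genuinely delicate point is not any individual diagram chase but the size bookkeeping: the product over the solution set and the joint equalizer over $\mathrm{Hom}(W,W)$ must both be \emph{small} limits, which is exactly what the Solution Set Condition and local smallness are introduced to guarantee.
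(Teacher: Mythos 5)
The paper does not prove this theorem: it is quoted verbatim from Mac Lane (Theorem~V.6.2) and Freyd, and is used as a black box. Your proof is correct and is precisely the classical argument from those sources --- reduce to finding an initial object in each comma category $(X\downarrow G)$, show that category is small-complete and locally small, turn the solution set into a single weakly initial object via a product, and extract an initial object as the joint equalizer of all endomorphisms --- so there is nothing to compare against in the paper itself.
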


\subsection{Beck's monadicity theorem}

A functor $G\colon\D\to\C$ is {\em monadic} if and only if it is equivalent
to the forgetful functor from the category of algebras $\C^T$ to $\C$ for a monad $T$ on $\C$.
A colimit (or a limit) in a category $\C$ is {\em absolute} if and only if it is preserved by every
functor with domain $\C$.

\begin{theorem}\cite[Theorem~VI.7.1]{mac1998categories} \cite{beck1967triples}
\label{thm:beckmonadicity}
A functor 
$$
G\colon\D\to\C
$$ 
is monadic if and only if $G$ is a right adjoint
and $G$ creates coequalizers for those parallel pairs $f,g\colon A\to B$
in $\D$, for which
$$
\xymatrix{
G(A)
	\ar@<.5ex>[r]^{G(f)}
	\ar@<-.5ex>[r]_{G(g)}
&
G(B)
}
$$
has an absolute coequalizer in $\C$.
\end{theorem}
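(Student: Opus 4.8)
The statement is a biconditional, so the plan is to treat the two directions separately: the forward (necessity) direction as a direct verification carried out inside a category of algebras, and the backward (sufficiency) direction by exhibiting the comparison functor as an equivalence. Throughout I would fix an adjunction $F\dashv G$ with unit $\eta\colon\id_\C\to GF$ and counit $\varepsilon\colon FG\to\id_\D$, and write $T=(GF,\eta,G\varepsilon F)$ for the induced monad on $\C$, so that the comparison functor $K\colon\D\to\C^T$ sends an object $D$ to the algebra $(GD,G\varepsilon_D)$ and a morphism $h$ to $Gh$.

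For the necessity direction I would assume $G$ is monadic and, without loss of generality (since both stated properties are invariant under the defining equivalence), that $\D=\C^T$ with $G$ the forgetful functor. This functor has the free-algebra functor as a left adjoint, hence is a right adjoint. To show it creates coequalizers of those pairs whose image is absolutely coequalized, I would take algebra morphisms $f,g\colon(A,a)\to(B,b)$ whose underlying pair $f,g\colon A\to B$ has an absolute coequalizer $q\colon B\to Q$ in $\C$. Absoluteness makes $Tq$ the coequalizer of $Tf,Tg$; a short chase shows that $q\circ b$ coequalizes $Tf,Tg$, so there is a unique $c\colon TQ\to Q$ with $c\circ Tq=q\circ b$. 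One then checks that $(Q,c)$ is a $T$-algebra, that $q$ is an algebra morphism, and that $q$ is the coequalizer of $f,g$ in $\C^T$, which is exactly the creation property.

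For the sufficiency direction I would build a quasi-inverse $L$ to $K$. Given a $T$-algebra $(A,a)$, apply $F$ to form the parallel pair $\varepsilon_{FA},Fa\colon FGFA\rightrightarrows FA$ in $\D$. The key observation is that applying $G$ returns the pair $\mu_A,Ta\colon TTA\rightrightarrows TA$, and that $a\colon TA\to A$ is the coequalizer of this pair, exhibited as a split fork in $\C$ by the maps $\eta_A$ and $\eta_{TA}$; the split-fork identities reduce precisely to the unit and associativity axioms of the algebra together with naturality of $\eta$. Since split coequalizers are absolute, the hypothesis on $G$ applies and $G$ creates a coequalizer of $\varepsilon_{FA},Fa$ in $\D$, whose apex I define to be $L(A,a)$.

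It then remains to upgrade $K$ and $L$ to an equivalence. Because $G$ \emph{creates} these coequalizers, $G\bigl(L(A,a)\bigr)$ equals $A$ with structure map $a$, which gives $KL\cong\id_{\C^T}$; conversely, for $D\in\D$ the counit $\varepsilon_D$ is the coequalizer of $\varepsilon_{FGD},FG\varepsilon_D$ lying over the canonical split presentation of $(GD,G\varepsilon_D)$, so uniqueness of the created coequalizer gives $LK\cong\id_{\D}$. I expect the main obstacle to be concentrated here: checking the split-fork identities with the correct variance, and then using creation to extract \emph{both} the existence and the uniqueness needed for the two natural isomorphisms. Once these are established, $K$ is an equivalence and $G$ is monadic by definition.
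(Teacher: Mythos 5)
This statement is quoted from the literature (Mac\,Lane, Theorem~VI.7.1, and Beck's thesis); the paper itself gives no proof of it, so there is nothing internal to compare your argument against. Your outline is the standard textbook proof and is essentially sound: the necessity direction by reducing to the forgetful functor $\C^T\to\C$ and lifting the algebra structure along the absolute coequalizer (using that $T$ and $TT$ preserve it, so $Tq$ and $TTq$ remain epimorphisms for the algebra axioms and for uniqueness of the lift), and the sufficiency direction via the canonical split presentation $TTA\rightrightarrows TA\to A$ of an algebra, whose split-fork identities you correctly trace to the unit and associativity axioms plus naturality of $\eta$.

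Two points deserve attention. First, in the step ``$G(L(A,a))$ equals $A$ with structure map $a$, which gives $KL\cong\id$'': creation tells you that the created coequalizer $\lambda\colon FA\to L(A,a)$ satisfies $G\lambda=a$, but the structure map of $KL(A,a)$ is $G\varepsilon_{L(A,a)}$, not $G\lambda$; you still need the adjunction-transpose argument identifying $\lambda$ with $\varepsilon_{L(A,a)}$ (its transpose is $G\lambda\circ\eta_A=a\circ\eta_A=\id_A$), and you need to check that $L$ is functorial and the isomorphisms natural. These are routine but are genuine steps your sketch elides. Second, your ``without loss of generality $\D=\C^T$'' in the necessity direction silently assumes that strict creation of coequalizers is invariant under equivalence of functors, which it is not; this is harmless if one reads ``monadic'' as Mac\,Lane does (comparison functor an isomorphism), but the present paper defines monadic via equivalence, under which the forward implication holds only for a suitably iso-flexible notion of creation. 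Since the paper only ever uses the backward (sufficiency) direction, neither caveat affects its application here.
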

Beck's monadicity theorem is a device that allows us to prove that a functor is monadic without
having to explicitly describe the monad $T$ on $\C$ arising from the adjunction, 
describe its category of algebras $\C^T$ and to prove that $\C^T$ is equivalent to $\D$.

\section{The result}

The aim of this paper is to prove that the obviously defined forgetful functor
from $\OMP$ to $\BPosInv$ is a monadic right adjoint. To do this, we need to
characterize orthomodular posets as bounded posets with involution equipped with an
additional structure.

\begin{proposition}
\label{prop:OMP}
Let $A$ be a bounded poset with involution equipped with two
partial operations $+$ and $-$ such that
\begin{itemize}
\item $x+y$ is defined if and only if $x\perp y$ and
\item $y-x$ is defined if and only if $x\leq y$.
\end{itemize}
Suppose that the mappings 
$$
+\colon\perpop(A)\to U(A)
$$ 
and 
$$
-\colon IU(A)\to U(A)
$$
corresponding
to these partial operations are isotone 
and that the following conditions are satisfied.
\begin{enumerate}[({A}1)]
\setcounter{enumi}{-1}
\item For all $a∈A$, $a\perp 0$ and $a+0=a$.
\item For all $a,b∈A$, $a\perp b$ implies that $b\perp a$ and then $a+b=b+a$
\item For all $a,b,c∈A$, if $a\perp b$ and $b\perp c$ and $a\perp c$, then $a\perp (b+c)$ and
$(a+b)\perp c$ and $a+(b+c)=(a+b)+c$.
\item For all $a,b∈A$ such that $a\perp b$, $a+b\geq a$ and $(a+b)-a=b$.
\item For all $a∈A$, $a+a'=1$.
\end{enumerate}
Then $A$ is an orthomodular poset with $x\vee y=x+y$, for all $x\perp y$.

Moreover, for every orthomodular poset $A$, putting $x+y=x\vee y$ and
$x-y=x\wedge y'$ gives us a pair of partial operations satisfying these conditions.
\end{proposition}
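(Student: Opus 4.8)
The plan is to prove the two implications separately; the forward direction carries essentially all of the work, and the converse is routine verification.

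For the forward direction I would first record that (A0)--(A4) make $(A,+,0,1)$ an effect algebra. Commutativity (E1) is (A1), and (E4) holds because $a\perp 1$ forces $a\leq 1'=0$. The difference operation supplies a cancellation law directly: applying (A3) to $a+b=a+c$ yields $b=(a+b)-a=(a+c)-a=c$, and since $a+a'=1$ by (A4), this makes $a'$ the unique effect-algebra complement of $a$. For associativity (E2) one bridges the gap to (A2) by noting that if $a+b$ and $(a+b)+c$ are defined then, using (A1), (A3) and the fact that $'$ reverses the order, $c\leq(a+b)'\leq a'$ and $c\leq(a+b)'\leq b'$, so $a,b,c$ are pairwise orthogonal and (A2) applies. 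One also checks that $a\leq b$ in the effect-algebra order implies $a\leq b$ in the given order (immediate from (A3)); the reverse inclusion is the crux.

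The heart of the proof is the identity $a+(b-a)=b$ for $a\leq b$, equivalently $a+(a+b')'=b$. I would put $d:=a+b'$ (defined since $a\leq b$ gives $a\perp b'$). From (A3) we get $a,b'\leq d$, hence $d'\leq a'$ and $d'\leq b$, so $a,b',d'$ are pairwise orthogonal. Coherence (A2) with (A4) gives $a+(b'+d')=(a+b')+d'=d+d'=1$, so cancellation yields $b'+d'=a'$; reassociating the same triple, $(a+d')+b'=a+(b'+d')=a+a'=1$, whence $a+d'=b$ by cancellation, and finally $(a+d')-a=d'$ identifies $d'=b-a$. Taking $a=x$, $b=y$, this same computation is exactly (OMP3): $x\vee(x\vee y')'=x+(x+y')'=y$. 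I expect this computation to be the main obstacle; the rest follows cleanly from coherence. Indeed, for (OMP2) and the equality $x\vee y=x+y$, given $x\perp y$ and a common upper bound $u\geq x,y$, order-reversal makes $x,y,u'$ pairwise orthogonal, so (A2) gives $(x+y)\perp u'$, i.e.\ $x+y\leq u$; thus $x+y$ is the least upper bound. For (OMP1), if $z\leq x$ and $z\leq x'$ then $x,x',z$ are pairwise orthogonal, so (A2) and (A4) give $1=x+x'\perp z$, forcing $z=0$.

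For the converse, let $A$ be an orthomodular poset with $x+y=x\vee y$ and $x-y=x\wedge y'$; since $A$ already carries an effect-algebra structure, only the domains, isotonicity and (A0)--(A4) need checking. The difference is well defined because $y\leq x$ gives $x\wedge y'=(x'\vee y)'$, which exists by (OMP2); isotonicity of $+$ and $-$ is monotonicity of $\vee$ and of $\wedge$, read off against the orders on $\perpop(A)$ and $IU(A)$. Finally (A0) and (A1) are $a\vee 0=a$ and commutativity of $\vee$; (A4) is $a\vee a'=(a\wedge a')'=1$ by (OMP1); (A2) is the coherence law, which holds in every orthomodular poset since pairwise orthogonal $a,b,c$ satisfy $a\leq b'\wedge c'=(b\vee c)'$; and (A3) follows from (OMP3) in the form $y\vee(x\wedge y')=x$ for $y\leq x$, which identifies $(a+b)-a$ with $b$.
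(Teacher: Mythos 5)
Your argument is correct, but it takes a more self-contained route than the paper at the decisive step. The paper's proof and yours coincide on the effect-algebra part: both get (E1) from (A1), (E4) from $a\leq 1'=0$, uniqueness in (E3) by cancelling with $-$ via (A3), and (E2) by first deducing pairwise orthogonality of $a,b,c$ (the paper gets $a\leq a+b$ from (A0) and isotonicity of $+$, you get it from (A3); both work) and then invoking the coherence axiom (A2). The divergence is what happens next: the paper simply cites Theorem~5.3 of Foulis--Bennett, which says that an effect algebra is an orthomodular poset with $a\vee b=a+b$ if and only if it satisfies the coherence law, and stops there. You instead prove that equivalence from scratch: you establish $a+(a+b')'=b$ for $a\leq b$ (which shows the effect-algebra order agrees with the given order --- a point the paper leaves entirely implicit) and then derive (OMP1), (OMP2), (OMP3) and $x\vee y=x+y$ directly from (A2)--(A4). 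This buys a proof readable without the external reference and makes explicit the order-coincidence issue, at the cost of length; the paper's citation is the shorter and intended route. For the converse, which the paper dismisses as ``well known,'' your sketch is fine except for one slightly quick step: identifying $(a\vee b)\wedge a'$ with $b$ does not follow from a single application of the orthomodular law $y=a\vee(y\wedge a')$; one also needs, e.g., a second application of (OMP3) to $b\leq(a\vee b)\wedge a'$ together with (OMP1) to kill the defect $b'\wedge((a\vee b)\wedge a')$. This is standard and easily repaired, so it is a presentational gap rather than a mathematical one.
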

\begin{proof}
The fact that every orthomodular poset satisfies these
conditions is well known.
By \cite[Theorem 5.3]{FouBen:EAaUQL}, an effect algebra is an orthomodular poset
with $a\vee b=a+b$ if and only if it satisfies (A2). So it remains to prove that
$A$ is an effect algebra. The conditions (E1) and (A1) are the same.
To prove (E2), suppose that $a,b,c∈A$ are such that $a\perp b$ and
$(a+b)\perp c$, that means $a+b≤c'$. From (A0) and the fact that $+$ is isotone
we obtain $a=(a+0)≤(a+b)≤c'$, hence $a\perp c$ and (similarly) $b\perp c$. By (A3),
$a\perp (b+c)$ and $(a+b)+c=a+(b+c)$. The existence part of (E3) is exactly (A4). To
prove the uniqueness part of (E3), suppose that $1=a+b_1=a+b_2$. By (A3), it then follows that
$$
b_1=(a+b_1)-a=1-a=(a+b_2)-a=b_2.
$$
To prove (E4), note that $a+1$ defined means that $a\perp 1$, which means $a≤1'=0$
and hence $a=0$.
\end{proof} 

Since every orthomodular poset is a bounded poset with involution and we defined
a morphism of orthomodular poset as a special type of morphism of the underlying
bounded posets with involution, there is an obviously defined forgetful functor
$G\colon\OMP\to\BPosInv$.

The main result of the present paper follows.
\begin{theorem}
The forgetful functor 
$$
G\colon\OMP\to\BPosInv
$$ is monadic.
\end{theorem}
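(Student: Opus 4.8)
The plan is to apply Beck's monadicity theorem (\Cref{thm:beckmonadicity}). This requires two things: that $G$ is a right adjoint, and that $G$ creates coequalizers for every parallel pair $f,g\colon A\to B$ in $\OMP$ whose image $G(f),G(g)$ admits an absolute coequalizer in $\BPosInv$.

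For the first point I would invoke the General Adjoint Functor Theorem (\Cref{thm:gaft}). The category $\OMP$ is locally small and, by \Cref{prop:OMPiscomplete}, small-complete. That $G$ preserves small limits is routine: products in $\OMP$ are formed on the underlying bounded posets with pointwise involution, which is exactly the product in $\BPosInv$, and the equalizer of $f,g$ is the inclusion of the subalgebra $\{x:f(x)=g(x)\}$, whose underlying object is the equalizer in $\BPosInv$; hence $G$ preserves both and therefore all small limits. For the solution set condition I would use a cardinality bound: any morphism $h\colon X\to G(A)$ factors through the subalgebra of $A$ generated by the image $h(X)$, and this subalgebra is obtained from $h(X)\cup\{0,1\}$ by iterated closure under $'$ and under $\vee$ of orthogonal pairs, so its cardinality is bounded by a function of $|X|$. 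Taking one representative of each isomorphism type of orthomodular poset below this bound, together with all morphisms from $X$ into them, yields a solution set.

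The heart of the argument is creation of coequalizers, and here the auxiliary functors $\perpop$, $\pperp$, $U$ and $I$ do the work. Suppose $q\colon G(B)\to Q$ is an absolute coequalizer of $G(f),G(g)$ in $\BPosInv$. Since an absolute coequalizer is preserved by every functor out of $\BPosInv$, each of $\perpop(q)$, $\pperp(q)$, $U(q)$ and $IU(q)$ is again a coequalizer in $\Pos$, in particular an epimorphism. I would use \Cref{prop:OMP} to present the orthomodular structure of $B$ as isotone maps $+\colon\perpop(G(B))\to U(G(B))$ and $-\colon IU(G(B))\to U(G(B))$. Because $f$ and $g$ are morphisms in $\OMP$ they commute with these operations, so $U(q)\circ(+)$ coequalizes $\perpop(G(f)),\perpop(G(g))$ and $U(q)\circ(-)$ coequalizes $IU(G(f)),IU(G(g))$. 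The universal properties of the coequalizers $\perpop(q)$ and $IU(q)$ then produce unique isotone maps $+_Q\colon\perpop(Q)\to U(Q)$ and $-_Q\colon IU(Q)\to U(Q)$ with $+_Q\circ\perpop(q)=U(q)\circ(+)$ and $-_Q\circ IU(q)=U(q)\circ(-)$. I would next verify that $Q$, equipped with $+_Q$ and $-_Q$, satisfies the hypotheses (A0)--(A4) of \Cref{prop:OMP}, and hence is an orthomodular poset $C$ with $G(C)=Q$. Each axiom is an equation between composites of the operations over the domains $\perpop$, $\pperp$ or $IU$; one checks the equation after precomposing with the appropriate epimorphism $\perpop(q)$, $\pperp(q)$ or $IU(q)$ (the three-fold orthogonality in the associativity axiom (A2) being exactly what $\pperp$ records), whence the equation holds on $Q$ because that map is epi. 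By construction $q$ lifts to a morphism $B\to C$ in $\OMP$.

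The main obstacle, and the reason absoluteness is indispensable, is precisely that $+$ and $-$ are partial operations whose domains $\perpop$ and $IU$ are themselves nontrivial functors of the object: a mere epimorphism $q$ in $\BPosInv$ would give us no control over $\perpop(q)$ or $IU(q)$, but an absolute coequalizer is preserved by these functors, which is exactly what lets us both define the operations on $Q$ and transport the axioms. Finally I would check that the lift is unique and is a coequalizer in $\OMP$: uniqueness holds because the factorizations defining $+_Q$ and $-_Q$ are unique and, by \Cref{prop:OMP}, the orthomodular structure is determined by $+$ and $-$; and given any $\OMP$-morphism $h\colon B\to D$ with $h\circ f=h\circ g$, the induced $\BPosInv$-morphism $Q\to G(D)$ preserves the operations (again since $\perpop(q)$ is epi), hence lifts uniquely to $\OMP$ and provides the required factorization through $q$. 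This shows that $G$ creates the relevant coequalizers, and \Cref{thm:beckmonadicity} yields that $G$ is monadic.
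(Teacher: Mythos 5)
Your proposal is correct and follows essentially the same route as the paper: the General Adjoint Functor Theorem with a cardinality-based solution set, then Beck's monadicity theorem, using the auxiliary functors $\perpop$, $\pperp$, $U$, $I$ and absoluteness to transport the partial operations $+$ and $-$ (in the sense of \Cref{prop:OMP}) onto the coequalizer object and to verify (A0)--(A4) by cancelling the epimorphisms $\perpop(q)$, $\pperp(q)$, $IU(q)$. The only cosmetic difference is that the paper phrases the axiom transfer via uniqueness of factorizations through coequalizers of natural transformations (and notes that (A4) must be checked after passing to $\Set$, since $a\mapsto(a,a')$ is not isotone), but this does not change the substance of the argument.
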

\begin{proof}
Let us apply Theorem~\ref{thm:gaft} to prove that $G$ is a right adjoint functor.
By Proposition~\ref{prop:OMPiscomplete},
$\OMP$ is small-complete.
It is easy to check that $G$ preserves all small limits.
Let us check the Solution
Set Condition. Let $P$ be a bounded poset with involution. Let $\mathcal W_P$ be a
set of bounded posets with involution
such that for every bounded poset with involution $V$ with 
$\card(P)\leq\card(V)\leq \max(\card(P),\aleph_0)$, there is a $W\in\mathcal W_P$ such that
$W$ is isomorphic to $V$. Consider
the family $\mathcal H_P=\{h_i\}_{i\in I}$ of all $\BPosInv$-morphisms $h_i\colon
P\to G(A_i)$, where $A_i$ is an 
orthomodular poset and $G(A_i)\in\mathcal W_P$. For every $\BPosInv$-morphism
$h\colon P\to G(A)$, the cardinality of the subalgebra $B$ of $A$ that is generated by the range of $h$
is bounded below by $\card(P)$ and above by $\max(\card(P),\aleph_0)$. Write $j\colon B\to A$
for the embedding of the subalgebra $B$ into $A$. Clearly, $h=G(j)\circ h_i$ for some
$h_i\in \mathcal H_P$. Since $G$ preserves small limits and the Solution Set Condition is satisfied,
$G$ is a right adjoint.

We have proved that $G$ is a right adjoint, so we may apply Theorem~\ref{thm:beckmonadicity}.
Let $A,B$ be orthomodular posets, let $f,g\colon A\to B$ be morphisms of
orthomodular posets. Suppose that
\begin{equation}
\label{diag:absolute}
\xymatrix{
G(A)
	\ar@<.5ex>[r]^{G(f)}
	\ar@<-.5ex>[r]_{G(g)}
&
G(B)
	\ar[r]^q
&
Q
}
\end{equation}
is an absolute coequalizer. Assuming this, we need to prove that
there is a unique morphism of orthomodular posets $\widehat{q}\colon B\to \widehat{Q}$ such that
\begin{equation}
\xymatrix{
A
	\ar@<.5ex>[r]^{f}
	\ar@<-.5ex>[r]_{g}
&
B
	\ar[r]^{\widehat{q}}
&
\widehat{Q}
}
\end{equation}
is a coequalizer in $\OMP$ and $Q=G(\widehat{Q})$, $q=G(\widehat{q})$. Let us 
prove that such $\widehat{q}$ exists. We use the fact that \eqref{diag:absolute}
is an absolute coequalizer to equip the bounded poset with involution $Q$ with a
structure of an orthomodular poset, in the sense of \Cref{prop:OMP}. 
Then we prove that $q$ comes from 
a morphism of orthomodular posets. Finally, we prove that this morphism of
orthomodular posets is a coequalizer of $f,g$ in $\OMP$.

Let $A$ be an orthomodular poset. Consider \Cref{prop:OMP}; the partial operation $+_A$ on $A$ is defined for
pairs $a,b\in A$ with $a\perp b$, so we may represent it as an isotone mapping (in
other words, a morphism in $\Pos$) from the poset $\perpop G(A)$ to the poset
$UG(A)$. Similarly, the partial operation $-_A$ on $A$ can be represented by an
$\Pos$-morphism $-_A\colon IUG(A)\to UG(A)$.  Moreover, every morphism $h\colon
X\to Y$ of orthomodular posets preserves the partial operations $+$ and $-$, that
means that the diagrams
\begin{align}
\label{diag:plusnatural}
\xymatrix{
\perpop G(X)
	\ar[r]^{\perpop G(h)}
	\ar[d]_{+_X}
&
\perpop G(Y)
	\ar[d]^{+_Y}
\\
UG(X)
	\ar[r]_{UG(h)}
&
UG(Y)
}
\\
\label{diag:minusnatural}
\xymatrix{
IUG(X)
	\ar[r]^{IUG(h)}
	\ar[d]_{-_X}
&
IUG(Y)
	\ar[d]^{-_Y}
\\
UG(X)
	\ar[r]_{UG(h)}
&
UG(Y)
}
\end{align}
commute. Hence $+$ and $-$ are natural transformations in the category of functors
$[\OMP,\Pos]$.

Consider the diagram
\begin{equation}
\label{diag:defineplus}
\xymatrix@C=3pc{
\perpop G(A)
	\ar@<.5ex>[r]^{\perpop G(f)}
	\ar@<-.5ex>[r]_{\perpop G(g)}
	\ar[d]_{+_A}
&
\perpop G(B)
	\ar[r]^{\perpop (q)}
	\ar[d]^{+_B}
&
\perpop (Q)
	\ar@{.>}[d]^{\boxplus}
\\
UG(A)
	\ar@<.5ex>[r]^{UG(f)}
	\ar@<-.5ex>[r]_{UG(g)}
&
UG(B)
	\ar[r]^{U(q)}
&
U(Q)
}
\end{equation}
Since $f$ and $g$ are morphisms of orthomodular posets,
the naturality of $+$ implies that both $f$ and $g$
left-hand squares in \eqref{diag:defineplus} commute.
From this and from the fact that the bottom row is
a coequalizer, it follows that the morphism $+_B\circ U(q)$
coequalizes the top pair of parallel arrows.
Indeed,
\begin{equation*}
\begin{split}
U(q)\circ+_B\circ \perpop G(f)&=U(q)\circ UG(f)\circ +_A\\&=U(q)\circ UG(g)\circ +_A\\&=
U(q)\circ+_B\circ \perpop G(g).
\end{split}
\end{equation*}
Since \eqref{diag:absolute} is an absolute coequalizer,
the top row in \eqref{diag:defineplus} is a coequalizer.
Since the top row in \eqref{diag:defineplus} is a coequalizer, there is a unique
arrow $\boxplus\colon \perpop (Q)\to U(Q)$ making the right-hand square of
\eqref{diag:defineplus} commute.
This way, we equipped the involutive bounded poset $Q$ with a partial binary operation
$\boxplus$, defined for all orthogonal pairs of elements of $Q$.

In an analogous way, we may the use the diagram
\begin{equation}
\label{diag:defineminus}
\xymatrix@C=3pc{
IUG(A)
	\ar@<.5ex>[r]^{IUG(f)}
	\ar@<-.5ex>[r]_{IUG(g)}
	\ar[d]_{-_A}
&
IUG(B)
	\ar[r]^{IU(q)}
	\ar[d]^{-_B}
&
IU(Q)
	\ar@{.>}[d]^{\boxminus}
\\
UG(A)
	\ar@<.5ex>[r]^{UG(f)}
	\ar@<-.5ex>[r]_{UG(g)}
&
UG(B)
	\ar[r]^{U(q)}
&
U(Q)
}
\end{equation}
to define a partial operation $\boxminus\colon IU(Q)\to U(Q)$ on $Q$.

Let us prove that these partial operations on $Q$ satisfy the conditions of
\Cref{prop:OMP}. We will proceed as follows: we will show that the fact
that $X$ is an orthomodular
poset can be expressed by encoding the conditions (A0)--(A4) by means of 
functors and natural transformations. For (A0), we will then give a detailed
proof that $Q$ satisfies (A0). After that, we will observe that the proof for
the remaining conditions can be given in a similar way.

\begin{enumerate}[({A}1)]
\setcounter{enumi}{-1}
\item For every bounded poset $P$ with involution,
there is an isotone mapping 
$$
(0,\_)_P\colon U(P)\to \perpop(P)
$$ that maps
every $a∈U(P)$ to the pair $(0,a)\in\perpop(P)$. This $ob(\BPosInv)$ indexed
family of arrows forms a natural transformation from $U$ to $\perpop$. The (A0)
property means that for every orthomodular poset $X$, the diagram
\begin{equation}
\label{diag:A0}
\xymatrix@C=3.5pc{
UG(X)
	\ar[r]^-{(0,\_)_{G(X)}}
	\ar[rd]_{\id}
&
\perpop G(X)
	\ar[d]^{+_X}
\\
~
&
UG(X)
}
\end{equation}
commutes. Therefore, $+\circ((0,\_)G)=\id_{UG}$ in the category 
of functors $[\OMP,\Pos]$. Equivalently, the diagram 
\begin{equation}
\label{diag:A0nats}
\xymatrix@C=3.5pc{
UG
	\ar[r]^-{(0,\_)G}
	\ar[rd]_{\id}
&
\perpop G
	\ar[d]^{+}
\\
~
&
UG
}
\end{equation}
in the category $[\OMP,\Pos]$ commutes.
\item
For every bounded poset with involution $P$, there is a poset automorphism 
$s_P\colon \perpop(P)\to\perpop(P)$ given by the rule $s_P(a,b)=(b,a)$. Moreover,
the family of morphisms $s_\_$ indexed by the objects of $\BPosInv$ forms a
natural transformation 
$$
s\colon\perpop\to\perpop
$$ 
in the category of functors $[\OMP,\Pos]$.

The axiom
(A1) means, that for every orthomodular poset $X$, the diagram
\begin{equation}
\xymatrix@C=2.5pc{
\perpop G(X)
	\ar[r]^-{s_{G(X)}}
	\ar[rd]_{+_X}
&
\perpop G(X)
	\ar[d]^{+_X}
\\
~
&
UG(X)
}
\end{equation}
commutes. This implies that $+\circ (sG)=+$ in the category of functors
$[\OMP,\Pos]$.
\item
For every orthomodular poset $X$, there are isotone mappings
$$
r_X,l_X\colon\pperp G(X)\to\perpop G(X)
$$
given by the rules
$$
r_X(a,b,c)=(a,b+c)
\qquad
l_X(a,b,c)=(a+b,c)
$$
The families of $\Pos$-morphisms $r_\_$ and $l_\_$ indexed by objects of $\OMP$
form a pair of 
natural transformations 
$$
r,l\colon\pperp G\to\perpop G
$$ 
in the category of functors
$[\OMP,\Pos]$. The property (A3) then means that for every orthomodular poset
$X$, the diagram
\begin{equation}
\label{diag:A1}
\xymatrix{
\pperp G(X)
	\ar[r]^{r_X}
	\ar[d]_{l_X}
&
\perpop G(X)
	\ar[d]^{+_X}
\\
\perpop G(X)
	\ar[r]_{+_X}
&
UG(X)
}
\end{equation}
commutes, so $+\circ r=+\circ l$ in the category of functors $[\OMP,\Pos]$.
\item
For every orthomodular poset $X$, there is a mapping
$p_X\colon\perpop G(X)\to IUG(X)$ given by the rule
$$
p_X(a,b)=(a+b,a)
$$
Let $\pi_X\colon\perpop G(X)\to UG(X)$ be the projection 
to the first entry, given by the rule $\pi(a,b)=a$.
Again, the families $p_\_$ and $\pi_\_$ indexed by objects of
$\OMP$ are natural transformations in the category of functors
$[\OMP,\Pos]$.

By (A3), the diagram
$$
\xymatrix{
\perpop G(X)
	\ar[r]^{p_X}
	\ar[rd]_{\pi_X}
&
IUG(X)
	\ar[d]^{-_X}
\\
~
&
UG(X)
}
$$
commutes, meaning that $-\circ p=\pi$ in the category $[\OMP,\Pos]$.
\item
To express this property using functors and natural transformations, we need to
change the target category of our functors from $\Pos$ to $\Set$. This appears to
necessary because
the mapping $a\mapsto (a,a')$ is not isotone.

Write $S\colon\Pos\to\Set$ for the straightforwardly defined 
functor that takes every poset to its underlying set. For every orthomodular poset
$X$, there is an morphism of sets $c_X\colon SUG(X)\to S\perpop G(X)$ given
by the rule $c_X(a)=(a,a')$. The property (A4) then means that
the diagram
$$
\xymatrix{
SUG(X)
	\ar[r]^{c_X}
	\ar[rd]_{1}
&
S\perpop G(X)
	\ar[d]^{S+_X}
\\
~
&
SUG(X)
}
$$
commutes; here, $1\colon SUG(X)\to SG(X)$
denotes the constant mapping with value $1∈SG(X)$.
Therefore, $c\circ (S+)=1$ in the category of functors $[\OMP,\Set]$.
\end{enumerate}

Let us prove in detail that the partial operation $\boxplus$ on $Q$ satisfies
(A0). Consider the diagram
\begin{equation}
\label{diag:A0transfer1}
\xymatrix@C=2.5pc{
UG(A)
	\ar@<.5ex>[r]^{UG(f)}
	\ar@<-.5ex>[r]_{UG(g)}
	\ar[d]_{(0,\_)_{G(A)}}
&
UG(B)
	\ar[r]^{U(q)}
	\ar[d]^{(0,\_)_{G(B)}}
&
U(Q)
	\ar[d]^{(0,\_)_Q}
\\
\perpop G(A)
	\ar@<.5ex>[r]^{\perpop G(f)}
	\ar@<-.5ex>[r]_{\perpop G(g)}
	\ar[d]_{+_A}
&
\perpop G(B)
	\ar[r]^{IU(q)}
	\ar[d]^{+_B}
&
\perpop(Q)
	\ar[d]^{\boxplus}
\\
UG(A)
	\ar@<.5ex>[r]^{UG(f)}
	\ar@<-.5ex>[r]_{UG(g)}
&
UG(B)
	\ar[r]^{U(q)}
&
U(Q)
}
\end{equation}
Since \eqref{diag:A0} commutes, both left and middle verticals of
\eqref{diag:A0transfer1} compose to identity, therefore
\begin{equation}
\label{diag:A0transfer2}
\xymatrix@C=3pc{
UG(A)
	\ar@<.5ex>[r]^{UG(f)}
	\ar@<-.5ex>[r]_{UG(g)}
	\ar[d]_{\id}
&
UG(B)
	\ar[r]^{U(q)}
	\ar[d]^{\id}
&
U(Q)
	\ar[d]^{\boxplus\circ(0,\_)_Q}
\\
UG(A)
	\ar@<.5ex>[r]^{UG(f)}
	\ar@<-.5ex>[r]_{UG(g)}
&
UG(B)
	\ar[r]^{U(q)}
&
U(Q)
}
\end{equation}
commutes. Note that if we replace the rightmost arrow in \eqref{diag:A0transfer2}
by $\id$, the diagram still commutes. However, by an analogous argument that
we used to define $+$ on $Q$, the rightmost vertical arrow in
\eqref{diag:A0transfer2} is unique, meaning that the diagram
\begin{equation}
\label{diag:A0Q}
\xymatrix@C=3.5pc{
U(Q)
	\ar[r]^-{(0,\_)_{Q}}
	\ar[rd]_{\id}
&
\perpop (Q)
	\ar[d]^{\boxplus}
\\
~
&
U(Q)
}
\end{equation}
commutes.
Therefore, $\boxplus\circ(0,\_)_Q=id_{U(Q)}$ or, in other
words, for all $x∈Q$, $0\boxplus x$ is defined and $0\boxplus x=x$.
Thus, the partial operation $\boxplus$ on $Q$ satisfies the condition (A0).

The reader should now observe that the essence of our proof that $\boxplus$
satisfies the contition (A0) lies in the fact that we can represent the condition
(A0) by the commutative diagram \eqref{diag:A0nats} containing functors of the type
$
\_\circ G\colon\OMP\to\Pos
$ and natural transformations among them.  A routine
argument then allows us to use the fact that the orthomodular posets $A$ and $B$
satisfy (A0) to show that \eqref{diag:A0Q} commutes, hence the partial operation
$\boxplus$ on $Q$ satisfies the condition (A0).  In the previous part of the proof
we have demonstrated that it is possible to express the remaining conditions
(A1)--(A4) by a commutative diagram in $[\OMP,\Pos]$ or $[\OMP,\Set]$. We may thus
prove in an analogous way that the partial operations $\boxplus$ and $\boxminus$ on
$Q$ satisfy the conditions (A1)--(A4) so we skip the this part of the proof.

We know that the partial operations $\boxplus$ and $\boxminus$ we defined on $Q$
satisfy the conditions (A0)--(A4).  In other words, there is a orthomodular poset
$\widehat{Q}$ such that $Q=G(\widehat{Q})$.  Moreover, the $\BPosInv$-morphism $q\colon
G(B)\to Q=G(\widehat{Q})$ satisfies $q(x+y)=q(x)\boxplus q(y)$, for all $x\perp y$
in $B$, because the right-hand square
of~\eqref{diag:defineplus} commutes. That means, there is a
morphism of orthomodular posets $\widehat q\colon B\to \widehat{Q}$ such that
$q=G(\widehat q)$.  With this
fact in mind, we may now observe that the diagram~\eqref{diag:defineplus} means
that $\widehat{q}\circ f=\widehat{q}\circ g$ in $\OMP$ and since the orthomodular
poset structure on $Q$ arising from the diagrams \eqref{diag:defineplus} and
\eqref{diag:defineminus} is unique, we see that $\widehat{Q}$ is unique.
Uniqueness of the morphism $\widehat{q}$ follows from the fact that $G$ is a
faithful functor.

Let us prove that $\widehat{q}$ is a coequalizer of the pair $f,g$ in $\OMP$.
Let $h\colon B\to C$ be a morphism of orthomodular posets such that $h\circ f=h\circ g$. Since
the diagram~\eqref{diag:absolute} is a coequalizer in $\BPosInv$, there is a unique morphism of
bounded posets with involution
$
e\colon G(\widehat{Q})\to G(C)
$ such that $e\circ G(\widehat{q})=e\circ q=G(h)$. It remains to prove that this $e$ preserves
the partial operation $\boxplus$ on $Q$. Consider the following diagram:
\begin{equation}
\label{diag:final}
\xymatrix@R=1.2pc{
\perpop G(B)
	\ar[rd]_{\perpop (q)}
	\ar[ddd]_{+_B}
	\ar[rr]^{\perpop G(h)}
&
~
&
\perpop G(C)
	\ar[ddd]^{+_C}
\\
~&
\perpop (Q)
	\ar[d]^{\boxplus}
	\ar[ru]_{\perpop (e)}
&
~
\\
~
&
U(Q)
	\ar[rd]^{U(e)}
&
~
\\
UG(B)
	\ar[ru]^{U(q)}
	\ar[rr]_{UG(h)}
&
~
&
UG(C)
}
\end{equation}

We need to prove that the right-hand square in~\eqref{diag:final} commutes. By the commutativity
of the diagram~\eqref{diag:defineplus}, we already know that
the left hand square in~\eqref{diag:final} commutes. 
moreover, as $G(h)=e\circ G(\widehat{q})$ we see that
$$
\perpop G(h)=\perpop(e\circ G(\widehat q))=\perpop(e)\circ\perpop G(\widehat q)=
\perpop(e)\circ \perpop(q)
$$
so the top triangle in \eqref{diag:final} commutes.
Similarly, we can prove
$$
UG(h)=U(e)\circ U(q)
$$
so the bottom triangle in \eqref{diag:final} commutes.
Since $h$ is a morphism of orthomodular posets, the outer square of~\eqref{diag:final} commutes. Therefore,
\begin{multline}
\label{eq:final}
+_C\circ\perpop(e)\circ\perpop(q)=
+_C\circ\perpop G(h)=\\
UG(h)\circ+_B=
U(e)\circ U(q)\circ+_B=
U(e)\circ\boxplus\circ\perpop(q)
\end{multline}

Since the top row in~\eqref{diag:defineplus} is a coequalizer, $\perpop(q)$ 
is a coequalizing arrow and thus an epimorphism. Therefore,  \eqref{eq:final} 
implies that $+_C\circ
\perpop(e)=U(e)\circ \boxplus$
and we see that the right-hand square of~\eqref{diag:final} commutes.
\end{proof}
\section{Further research}
Sice $G$ is a right adjoint, there is a functor 
$$
F\colon\BPosInv\to\OMP
$$ left adoint to $G$.
It would be interesting to describe this functor, in an explicit way.
\section*{Declarations}
\begin{description}
\item[\bf Funding:]This research is supported by grants VEGA 2/0142/20 and
1/0006/19, Slovakia and by the Slovak Research and Development Agency under the
contracts APVV-18-0052 and APVV-20-0069.
\item[\bf Conflict of interest/Competing interest:]None.
\item[\bf Availability of data and material:]Not applicable.
\item[\bf Code availability:]Not applicable.
\item[\bf Authors' contributions:]There is a single author. 
\end{description}

%\bibliography{mybib}
%\bibliographystyle{spbasic}
\end{document}